\newcommand{\dis}{\displaystyle}
\newcommand{\codim}{\operatorname{codim}}
\def\N{{\mathbb N}}
\def\A{{\mathcal A}}
\def\G{{\mathcal G}}
\def\E{{\mathcal E}}
\def\Z{{\mathbb Z}}
\def\a{{\mathbf a}}
\def\e{{\mathbf e}}
\def\b{{\mathbf b}}
\theoremstyle{plain}
\newtheorem{thm}{Theorem}[section]
\newtheorem{lemma}[thm]{Lemma}
\newtheorem{prop}[thm]{Proposition}
\newtheorem{rem}[thm]{Remark}
\theoremstyle{definition}
\theoremstyle{remark}
\begin{document}

\title[Toric Ideals]{Toric Ideals of Simple Surface Singularities}

%    author one information

\author{G\"ulay Kaya}
\address{Department of Mathematics, Galatasaray~University, \.{I}stanbul, Turkey}
\curraddr{} \email{gukaya@gsu.edu.tr}
\thanks{}

%    author two information
\author{P\i nar Mete}
\address{Department of Mathematics, Bal\i kesir University, Bal\i kesir, 10145 Turkey}
\curraddr{} \email{pinarm@balikesir.edu.tr}
\thanks{}

%    author three information
\author[Mesut \c{S}ah\.{i}n]{Mesut \c{S}ah\.{i}n}
\address{Department of Mathematics, \c{C}ank\i r\i ~Karatek\.{i}n University, \c{C}ank\i r\i, 18100 Turkey}
\curraddr{} \email{mesutsahin@gmail.com}

\subjclass[2000]{Primary: 14M25, 13A50; Secondary: 32S45, 15C25}

\keywords{toric ideals, simple surface singularities, semigroup of
Lipman}

\date{\today}

%\dedicatory{}
\begin{abstract}
In this paper, we study a class of toric ideals obtained by using some geometric data of ADE trees which are the minimal resolution graphs of rational surface singularities. We compute explicit Gr\"obner bases for these toric ideals that are also minimal generating sets consisting of large number of binomials of degree $\leq 4$. In particular, they give rise to squarefree initial ideals as well. 
\end{abstract}

\maketitle

\section{Introduction}
Algebraic varieties having squarefree initial ideals are of
special interest. Many authors have presented squarefree initial
ideals arising from different contexts, see for instance
\cite{bruns,haase,pfaffians,kit,oh1}. Normal toric ideals are known to
have at least one squarefree term in each minimal binomial
generator by \cite[Proposition 4.1]{simis} and \cite[Lemma
6.1]{oh2}. They have Cohen-Macaulay initial ideals when their configurations are 
$\Delta$-normal, see \cite{OShea-Thomas}. These suggest that they have (at least 
simplicial ones) squarefree initial ideals with respect to a term order. The
challenge lies in the choice of a correct term order. Motivated by fundamental questions
in combinatorial commutative algebra and its applications to statistics and optimization, recently,
with the aid of Gale diagrams, Dueck et al. \cite{DHS} have
succeeded to show the existence of a term order with respect to which normal toric ideals of codimension 
$2$ have squarefree initial ideals. They have also proven that the Gr\"{o}bner bases giving rise to these initial
ideals constitute minimal generating sets for the toric ideals.

The aim of the present paper is to extend the discussion to certain examples of normal 
toric ideals of higher codimension. As a case study, we concentrate on certain toric ideals
of higher codimension arising from singularity theory that are promising because of the speciality
of the corresponding singularities. These are the simplicial normal toric
ideals corresponding to the simple or $ADE$ surface singularities.
In section 3, we prove that toric ideals of $DE$ type singularities have squarefree initial ideals.
Our methods are computational and use the configurations given in
\cite{meral-selma}. The reduced Gr\"{o}bner bases we obtain are
also shown to be minimal generating sets containing a large number
of binomials of degree at most $4$, see section \ref{sec04}. In
the last section, we speculate on initial ideals of $A_n$-type
trees whose configurations seem impossible to give a closed form.

\section{Preliminaries}

\subsection{Gr\"{o}bner basis.} Let $\A=\{\mathbf{a_1},\dots,\mathbf{a_N}\}$ be a configuration in
$\mathbb{Z}^n$  and $K[\A]:=K[\{x_\mathbf{a} \,| \,\mathbf{a} \in
\A\}]$ denote the polynomial ring in variables $x_\mathbf{a}$ with
$\mathbf{a}\in \A$ over the field $K$. Consider the affine
semigroup $\N\A =
\{\lambda_1\mathbf{a_1}+\cdots+\lambda_N\mathbf{a_N}  \; : \;
\lambda_i \in \N \}$ and let
$K[\N\A]:=K[\{\mathbf{u^{a}}\,|\,\mathbf{a}\in \A\}]$ be the
associated semigroup ring. The \textit{toric ideal} $I_{\A}$ of
$\A$ is the kernel of the following $K$-algebra epimorphism:
$$\pi:K[\A]\rightarrow K[\N\A],
\quad \pi(x_\mathbf{a}):=\mathbf{u^{a}}=u_1^{a_{1}} \cdots
u_n^{a_{n}}.$$ It is known that $I_{\A}$ is a prime ideal
generated by binomials $x_\mathbf{a}-x_\mathbf{b}$ with
$\pi(x_\mathbf{a})=\pi(x_\mathbf{b})$ \cite{sturmfels}. The zero
set of $I_{\A}$ is called the toric variety $V_{\A}$ of $\A$.

The {\it initial monomial}, $in(f)$, of a polynomial $f\in I_{\A}
\setminus \{0\}$ is the greatest monomial of $f$ with respect to a
term order on the monomials of $K[\A]$. The \emph{initial ideal},
$in(I_{\A})$, of $I_{\A}$ is a \emph{monomial} ideal generated by
all initial monomials of polynomials in $I_{\A}$. A finite subset
$\G \subset I_{\A}$ is called a Gr\"obner basis of $I_{\A}$ if $in
(I_{\A})=in (\G)$, where $in (\G)$ is the monomial ideal generated
by initial monomials of polynomials in $\G$. The following is the
key in proving our main results.

\begin{lemma}\cite[Lemma 1.1]{ahh} \label{lem} With the preceding notation, let $M$ and $M'$ be monomials in $K[\A]$.
The finite set $\G$ is a Gr\"obner basis of $I_{\A}$ if
and only if $\pi(M)\neq \pi(M')$ for all $M \notin in(\G)$ and $M'
\notin in(\G)$ with $M \neq M'$.
\end{lemma}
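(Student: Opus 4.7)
The plan is to establish the two implications separately, using only the fact that $I_{\A}$ is a binomial ideal and that the semigroup ring $K[\N\A]$ has $\{\mathbf{u^a} : \mathbf{a}\in \N\A\}$ as a $K$-basis.

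For the ($\Rightarrow$) direction I would argue by contradiction. Suppose $\G$ is a Gr\"obner basis but there exist distinct monomials $M,M'\notin in(\G)$ with $\pi(M)=\pi(M')$. Then the binomial $f:=M-M'$ is a nonzero element of $I_{\A}$, and its initial monomial $in(f)$ equals $M$ or $M'$. Since $in(I_{\A})=in(\G)$, this initial monomial must lie in $in(\G)$, contradicting the choice of $M$ and $M'$.

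For the ($\Leftarrow$) direction I would take an arbitrary nonzero $f\in I_{\A}$ and run the standard division algorithm against $\G$ to produce a remainder $\tilde f$. By construction $\tilde f\in I_{\A}$, and every monomial $M_i$ appearing in $\tilde f$ lies outside $in(\G)$. Applying $\pi$ gives $0=\pi(\tilde f)=\sum_i c_i\,\pi(M_i)$ in $K[\N\A]$. By the hypothesis, the elements $\pi(M_i)$ are pairwise distinct monomials in the semigroup ring, hence $K$-linearly independent, so every $c_i$ vanishes and $\tilde f=0$. Consequently the leading term $in(f)$ must have been cancelled in the first step of the reduction, which forces $in(f)$ to be divisible by $in(g)$ for some $g\in\G$. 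Hence $in(I_{\A})\subseteq in(\G)$, and the opposite inclusion is automatic because $\G\subset I_{\A}$.

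The only genuine obstacle is routine bookkeeping: one has to check that the division algorithm in $K[\A]$ terminates, that the resulting remainder has all its monomials outside $in(\G)$, and that this remainder still lies in $I_{\A}$ when the input does. The last point is immediate since every reduction step subtracts a polynomial multiple of some $g\in \G\subset I_{\A}$. The conceptual content of the lemma is then the observation that the injectivity of $\pi$ on the set of standard monomials outside $in(\G)$ is exactly what distinguishes a Gr\"obner basis of $I_{\A}$ from an arbitrary finite subset.
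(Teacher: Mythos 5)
Your proof is correct; the paper gives no proof of this lemma (it is quoted verbatim from Aramova--Herzog--Hibi), and your argument is exactly the standard one behind that citation: monomials outside $in(\G)$ have pairwise distinct, hence $K$-linearly independent, images in the semigroup ring, so a remainder under division by $\G$ lying in $I_{\A}=\ker\pi$ must vanish, while the converse follows by applying $in(I_{\A})=in(\G)$ to the binomial $M-M'$. Both directions are complete as written, so there is nothing to add.
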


%Since \textit{(graded) revlex}  monomial order will specifically be used to obtain
%the reduced Gr\"obner basis of toric ideals, we give its definition.

%\begin{definition}[(Graded) Reverse Lex Order]
%Let $\alpha, \beta \in \mathbb{Z}^n_{\geq 0}$. $\alpha >_{revlex} \beta$,
%iff

%$$\mid \alpha\mid = \sum_{i=1}^{n} \alpha_{i}\, > \, \mid \beta \mid = \sum_{i=1}^{n} \beta_{i},
%\;or\;  \mid \alpha\mid\, =\, \mid \beta\mid$$

%\vspace{.2cm}
%\noindent and $\alpha_{i} < \beta_{i}$ for the last index \emph{i} with $\;\alpha_{i}\, \neq\, \beta_{i} \}$.
%\end{definition}

%\begin{notation}
%Throughout this paper, we assume that the first
%term of a binomial is its initial monomial for a fixed term order.
%\end{notation}

\subsection{$ADE$-trees} Here, we briefly review basics of $ADE$-trees, see \cite{bourbaki,Artin,zariski,DuVal,Grauert} for more details. Let $\Gamma $ be a weighted graph without loops, with vertices $C_1,\dots
,C_n$ and with weight $w_i\geq 2$ at each vertex $C_i$. The
incidence matrix ${\mathcal M}(\Gamma )=[c_{ij}]$, associated with
$\Gamma $ is a symmetric matrix and defined in the following way:
$c_{ii}=-w_i$ and $c_{ij}$ is the number of edges linking the
vertices $C_i$ and $C_j$ whenever $i\not =j$. On the free abelian group $\mathcal{L}$
generated by  the vertices $C_i$ of $\Gamma$, ${\mathcal M}({\Gamma })$ defines
a symmetric bilinear form $(Y\cdot Z)$ for a pair $(Y,Z)$ of
elements in $\mathcal L$ via $(C_i\cdot C_j):=c_{ij}$. 
The elements $C=\sum_{i=1}^{n}m_iC_i$ of $\mathcal L$ will be called
{\it cycles} of the graph $\Gamma$ where $m_i \in  \Z$. A {\it positive cycle} is a
non-zero cycle with non-negative coefficients.

\vspace{.2cm} If $w_i=2$ for all $i$ and $\displaystyle C\cdot
C\leq -2$ for any cycle then $\Gamma $ is of type $A_{n}$, $D_n$,
$E_6$, $E_7$ and $E_8$. It is well known that these are the Dynkin
diagrams obtained as the minimal resolution graphs of the rational
singularities of complex surfaces. The semigroup of Lipman is the
set
$$\E^{+}(\Gamma ):=\{ C \in \mathcal L \mid  (C \cdot C_i)\leq 0 \quad\mbox{for}\quad 1\leq i \leq n\}.$$
which is not empty since $M(\Gamma )$ is negative definite in this case. By \cite{lipman}, each  element of this set corresponds to a function in the maximal ideal of the local ring of the singularity on the surface having $\Gamma $ as the minimal resolution graph.

In \cite{meral} and \cite{meral-selma}, the authors have studied
the structure of this semigroup and provided an algorithm to find
a generating set over $\Z $ by associating an affine toric variety $V_{\A}$, c.f. also \cite{mesut}.
This toric variety corresponds to the configuration $\A$ of the
smallest $n$-tuples $(d_1,\ldots ,d_n)\in \N^n$ such that
$(C\cdot C_i)=-d_i$ for $C\in \E^{+}(\Gamma )$.  The interested
reader can see \cite{meral-selma} for the details.

\section{Squarefree initial ideals} \label{sec03} In this section, we obtain reduced
Gr\"obner bases for toric ideals of affine toric varieties
corresponding to $DE$-type singularities. Throughout the section, we assume that the first term of a 
binomial is its initial monomial for a fixed term order. In order to find the
set $\A$ which determines the parametrization of the toric variety
$V_{\A}$, we use Proposition $3.9$ and $3.12$ in
\cite{meral-selma}.
\subsection{$D_n$-type singularities} We have $n \geq 4$.
Since toric ideals behave in a different manner when $n$ is
even and odd, we discuss two cases separately.

\smallskip

\noindent {\bf When $n=2m$:} Let $J=\{3,5,\dots,n-1\}$ and
$J^c=\{2,4,\dots,n-2\}$. Consider the subset
$$\dis D_{2m}:=\{2\e_i, \e_j, 2\e_{1},2\e_{n}, \e_k+\e_{\ell},\e_i+\e_{1}+\e_n
\,|\, i,k, \ell \in J, \, j \in J^c \,\, \mbox{and} \,\, k<\ell \},$$ where
$\{\e_1,\dots,\e_n\}$ is the canonical basis of $\Z^n$. Then we introduce one variable for each element in the set $D_{2m}$ and define the polynomial ring $K[D_{2m}]$ to be the $K$-algebra generated by the
set of these variebles
$$\dis \{x_1,\dots,x_n, x_{j,k},y_{i}\;|
\,\, \mbox{where}\,\, i,j,k \in J \,\, \mbox{and} \,\, j<k \}.$$
Similarly we define the semigroup ring $K[\N D_{2m}]$ to be the $K$-algebra generated
by $$\dis \{u_i^2, u_j, u_{1}^2, u_{n}^2, u_ku_{\ell},u_iu_{1}u_n
\,|\, i,k, \ell \in J, \, j \in J^c \,\, \mbox{and} \,\, k<\ell
\}.$$

\noindent The toric ideal $I_{D_{2m}}$ is thus the kernel of $\pi:K[D_{2m}]\rightarrow K[\N D_{2m}]$ which is defined as:
$$\pi(x_{i})=u_i^2, \; \pi(x_{j})=u_j, \;\pi(x_{1})=u_{1}^2, \; \pi(x_{n})=u_{n}^2,
\;\pi(x_{k,\ell})=u_ku_{\ell},\\$$
$$\pi(y_i)=u_iu_{1}u_{n}$$
for all $i,k, \ell \in J, \, j \in J^c$ with $k<\ell$.

We next define the ordering $\succ^{even}$ to be the reverse lexicographic ordering imposed by:
$$x_{1}\succ \dots \succ x_{n-1} \succ x_n \succ x_{j_1,j_2}\succ x_{j_3,j_4}\succ y_{k_1}\succ y_{k_2}$$
where $j_1,j_2,j_3,j_4,k_1,k_2\in J$ with $j_2
< j_4$ or $j_2=j_4, \;j_1<j_3$; and $k_1<k_2$. 

Then a squarefree initial ideal for $I_{D_{2m}}$  is given by the
following theorem, since the first monomial of a binomial is its
initial term.
\begin{thm}\label{evenaffine}
The following set $\mathcal{G}_{D_{2m}}$
$$\begin{array}{lllll}
&x_{i,k}x_{j,\ell}-x_{i,j}x_{k,\ell} \quad &x_{i,\ell}x_{j,k}-x_{i,j}x_{k,\ell} \quad &i<j<k<\ell \\
&x_{i,j}x_{i,k}-x_ix_{j,k} \quad &x_jx_{i,k}-x_{i,j}x_{j,k} \quad &i<j<k \\
&x_kx_{i,j}-x_{i,k}x_{j,k} \quad & x_{j,k}y_i-x_{i,j}y_k \quad & i<j<k \\
&x_{i,k}y_j-x_{i,j}y_k \quad & \quad & i<j<k \\
&x_ix_j-x_{i,j}^2 \quad &x_jy_i-x_{i,j}y_j & i<j\\
&x_{i,j}y_i-x_iy_j \quad &x_{i,j}x_{1}x_{n}-y_iy_j \quad & i<j\\
&x_ix_{1}x_n-y_i^2 \quad & \quad & i \in J
\end{array}$$ is a Gr\"obner basis of $I_{D_{2m}}$ with respect to the ordering $\succ^{even}$ defined above.
\end{thm}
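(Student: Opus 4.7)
The proof uses Lemma~\ref{lem}, which requires verifying (a) every binomial in $\mathcal{G}_{D_{2m}}$ belongs to $I_{D_{2m}}$, and (b) any two distinct monomials $M, M'$ in $K[D_{2m}]$ not divisible by any initial (first) term from $\mathcal{G}_{D_{2m}}$ satisfy $\pi(M)\neq \pi(M')$. Condition (a) is routine: apply $\pi$ to both sides of each binomial, e.g.\ $\pi(x_{i,k}x_{j,\ell}) = u_iu_ju_ku_\ell = \pi(x_{i,j}x_{k,\ell})$, $\pi(x_{i,j}x_1x_n) = u_iu_ju_1^2u_n^2 = \pi(y_iy_j)$, and similarly for the remaining rows.

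The bulk of the argument is (b). First one lists the initial monomials of $\mathcal{G}_{D_{2m}}$ under $\succ^{even}$ and reads off the defining rules for a \emph{standard} monomial (one not divisible by any initial term). Going through the seven rows: at most one factor $x_i$ with $i\in J$ may appear; the arc variables $x_{a,b}$ that occur must share no common minimum, and any two arcs on disjoint index sets must be ``in order'' (neither crossing nor nesting); an $x_i$ with $i\in J$ cannot coexist with an arc $x_{a,b}$ having $a<i$; a $y_i$ cannot coexist with an arc $x_{a,b}$ having $b>i$; and $x_1x_n$ cannot multiply any arc or any $x_i$ with $i\in J$.

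The plan is then to show that a standard monomial $M$ is uniquely recoverable from $\pi(M) = u_1^{c_1}\cdots u_n^{c_n}$. For $j\in J^c$, only $x_j$ touches $u_j$, so $c_j$ directly reads off its exponent. The exponents $c_1, c_n$ receive contributions only from $x_1, x_n,$ and the $y_i$'s; combined with the standard-form constraint forbidding $x_1x_n$ alongside any arc or singleton $x_i$, $i\in J$, these two exponents determine the $x_1, x_n$-exponents and the total number $E=\sum_i e_i$ of $y$-factors. After subtracting the $y_i$-contributions from each $c_i$, the residual exponents must come from a product of arcs together with at most one singleton $x_i$, and the no-shared-minimum, no-crossing and no-nesting rules make this arc product unique given its multiset of endpoints, with the residual parity fixing the singleton $x_i$ if one is present.

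The main obstacle will be disentangling the simultaneous contributions of $y_i$, $x_i$ and arcs $x_{a,b}$ to a single coordinate $u_i$ with $i\in J$. The carefully chosen co-occurrence rules above --- especially that $y_i$ may only accompany arcs with $b\leq i$, and that no arc can multiply $x_1x_n$ --- are precisely what separate $y$- from arc-contributions once $E$ is known, reducing the recovery to the purely combinatorial statement that a collection of arcs on $J$ with no shared minimum, no crossings and no nestings is determined by its multiset of endpoints. It is this combinatorial uniqueness, made available exactly by the choice of $\succ^{even}$, that drives the theorem.
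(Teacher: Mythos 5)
Your overall strategy coincides with the paper's: both proofs invoke Lemma \ref{lem} and reduce the theorem to showing that $\pi$ is injective on the monomials avoiding $in(\mathcal{G}_{D_{2m}})$, using the same structural description of those monomials (at most one $x_a$ with $a\in J$, arcs forming a chain $b_1<c_1\leq b_2<c_2\leq\cdots$, and the co-occurrence restrictions involving $x_1x_n$ and the $y_i$). The paper argues by contradiction, cancelling common variables and comparing largest indices, whereas you propose a direct reconstruction of $M$ from $\pi(M)$; that is a legitimate reorganization of the same idea, and the combinatorial core you isolate --- that a chain-ordered arc product is determined by its endpoint multiset --- is exactly the fact the paper exploits.

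However, two steps of your reconstruction fail as stated. First, $c_1$ and $c_n$ do \emph{not} determine $\alpha_1$, $\alpha_n$ and the number $E$ of $y$-factors: only the degree-three products $x_{i,j}x_1x_n$ and $x_ix_1x_n$ are initial terms, so $x_1x_n$ itself is a standard monomial, and it has the same restriction $(c_1,c_n)=(2,2)$ as $y_iy_j$; moreover $y$-variables may freely accompany powers of $x_1$ or of $x_n$. Disambiguation requires the coordinates indexed by $J$ as well, concretely the identity $2p+2q+r=\sum_{i\in J}c_i$ together with the implication ``$\alpha_1>0$ and $\alpha_n>0$ $\Rightarrow$ $p=q=0$''; this is precisely what the paper's displayed identities and its first case analysis on $\alpha_1,\alpha_n$ supply, and your sketch needs the analogous argument. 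Second, ``the residual parity fixes the singleton'' is not correct: arcs alone can contribute an even exponent to a single coordinate (e.g.\ $x_{i,j}x_{j,k}\mapsto u_iu_j^2u_k$) and the singleton contributes evenly to the coordinate it sits on, so every residual coordinate has the parity of its arc-endpoint count and parity detects nothing about $x_a^p$. What actually pins down the singleton is the co-occurrence rule that $x_a$ may only accompany arcs $x_{b,c}$ with $a\leq b$ (or $a=c$), i.e.\ the paper's inequality $a\leq c_q$ combined with the chain structure. Both gaps are repairable, but as written the peeling-off of the $x_1,x_n,y$-layer and of the singleton is where your argument is incomplete.
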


\begin{proof} Let $M$ and $M'$ be two monomials in $K[D_{2m}]$ with $M \notin in(\mathcal{G}_{D_{2m}})$ and $M' \notin
in(\mathcal{G}_{D_{2m}})$, where $in(\G_{D_{2m}})$ is the
monomial ideal generated by initial terms of binomials in
$\G_{D_{2m}}$. Since $x_ix_j \in in(\G_{D_{2m}})$, we may assume
that
$$M=x_{a}^px_{1}^{\alpha_{1}}x_{n}^{\alpha_n}x_{b_1,c_1}\cdots
x_{b_q,c_q} y_{d_1}\cdots y_{d_r} \quad \mbox{and} \quad$$
$$M'=x_{a'}^{p'}x_{1}^{\alpha'_{1}}x_{n}^{\alpha'_n}x_{b'_1,c'_1}\cdots
x_{b'_{q'},c'_{q'}} y_{d'_1} \cdots y_{d'_{r'}},\quad \mbox{where}
\quad$$
$$x_{a}\succ x_{b_1,c_1} \succ \cdots \succ x_{b_q,c_q}\succ
y_{d_1} \succ \cdots \succ y_{d_r},$$
$$x_{a'} \succ x_{b'_1,c'_1} \succ \cdots \succ
x_{b'_{q'},c'_{q'}}\succ y_{d'_1} \succ \cdots \succ
y_{d'_{r'}}.$$

First, we observe that the ordering above implies that $c_1 \leq
\dots \leq c_q$, $c'_1 \leq \dots \leq c'_{q'}$ and $d_1 \leq
\dots \leq d_r$, $d'_1 \leq \dots \leq d'_{r'}$. Moreover, we have
$b_1<c_1 \leq b_2<c_2 \leq \dots \leq b_q<c_q$  and $b'_1<c'_1
\leq b'_2<c'_2 \leq \dots \leq b'_{q'}<c'_{q'}$, since
$x_{i,k}x_{j,\ell}, x_{i,\ell}x_{j,k}, x_{i,j}x_{i,k} \in
in(\G_{D_{2m}})$.

 The images of $M$ and $M'$ are found easily as
$$\pi(M)=u_{a}^{2p}u_{1}^{2\alpha_{1}+r}u_{n}^{2\alpha_n+r} u_{b_1}u_{c_1}\cdots
u_{b_q}u_{c_q} u_{d_1}\cdots u_{d_r}$$
$$\pi(M')=u_{a'}^{2p'}u_{1}^{2\alpha'_{1}+r'}u_{n}^{2\alpha'_n+r'}
u_{b'_1}u_{c'_1}\cdots u_{b'_{q'}}u_{c'_{q'}} u_{d'_1}\cdots
u_{d'_{r'}}.$$ In what follows we will prove that $\pi(M)=\pi(M')
\Rightarrow M=M'$, by the virtue of Lemma \ref{lem}. It follows
from $\pi(M)=\pi(M')$ that we have the following identities
\begin{eqnarray}
 \label{e9} 2\alpha_{1}+r &=& 2\alpha'_{1}+r' \\
  \label{e10} 2\alpha_{n}+r &=& 2\alpha'_{n}+r' \\
  \label{e11} 2p+2q+r &=& 2p'+2q'+r' \\
   \label{e12} \alpha_{1}-\alpha_{n} &=& \alpha'_{1}-\alpha'_{n} \quad \quad \quad (\mbox{follows directly from} \: (\ref{e9}) \: \mbox{and} \: (\ref{e10})).
\end{eqnarray}

To accomplish our goal $M=M'$, we will assume now that $M
\neq M'$ to obtain a contradiction in all possible cases
considered below. Since $M\neq M'$, we may suppose further that
they have no variable in common without loss of generality. This is because $in(\G_{D_{2m}})$ is an ideal and $M, M' \notin in(\G_{D_{2m}})$ implies that the new monomials obtained by dividing $M$ and $M'$ by their greatest common divisor will also lie outside of $in(\G_{D_{2m}})$.

If $\alpha_{1}>0$ and $\alpha_n >0$ then
$\alpha'_{1}=\alpha'_{n}=0$, as $M$ and $M'$ have no common
variable. Since $x_{i,j}x_{1}x_n, \,x_{k}x_{1}x_n \in
in(\G_{D_{2m}})$, we have $p=q=0$. This implies that
$r=2p'+2q'+r'$ by $(\ref{e11})$ and thus $2p'+2q'+2\alpha_{1}=0$
by $(\ref{e9})$, a contradiction.

If $\alpha_{1}>0$ and $\alpha_n =0$ then $\alpha'_{1}=0$ which
implies together with $(\ref{e12})$ that $\alpha_{1}=-\alpha'_{n}
\leq 0$, contradiction. The case $\alpha_{1}=0$ and $\alpha_n
>0$ is done similarly. So, we have only the case where
$\alpha_{1}=0$ and $\alpha_n =0$. A similar argument shows that
$\alpha'_{1}=\alpha'_n=0$. In this case $r=r'$ by $(\ref{e9})$.

\textbf{Case I:} Assume $r=r'>0$. Since $x_jy_i \in
in(\G_{D_{2m}})$, for all $i<j$, it follows that $a \leq d_r$.
Again by $x_{i,j}y_i, x_{j,k}y_i, x_{i,k}y_j \in
in(\G_{D_{2m}})$, for all $i<j<k$, we have $(b_q<)c_q \leq d_r$
and $(b'_{q'}<)c'_{q'} \leq d'_{r'}$. Hence, $d_r$ (resp. $d'_r$)
is the biggest index appearing in $\pi(M)$ (resp. $\pi(M')$).
Since $\pi(M)=\pi(M')$, it follows that $d_r=d'_r$. But this
implies that $y_{d_r}$ is a variable appearing in both $M$ and
$M'$, contradiction.

\textbf{Case II:} Assume $r=r'=0$. If $q=0$ then $\pi(M)=\pi(M')$
implies that $u_{a}^{2p}= u_{a'}^{2p'}u_{b'_1}u_{c'_1}\cdots
u_{b'_{q'}}u_{c'_{q'}}$, which is possible only if $q'=0$ as
$b'_{q'}<c'_{q'}$. But in this case $a=a'$ and $x_{a}$ is a
common variable of $M$ and $M'$, a contradiction. Thus $q>0$ and
$q'>0$.

Since $x_j x_{i,k}, x_k x_{i,j} \in in(\G_{D_{2m}})$, we have
$a \leq c_q$ and $a' \leq c'_{q'}$.  Since $b_q < c_q$ and
$b'_{q'} < c'_{q'}$, we observe that $c_q$ (resp. $c'_{q'}$) is
the biggest index appearing in $\pi(M)$ (resp. $\pi(M')$) which
yields together with $\pi(M)=\pi(M')$ that $c_q = c'_{q'}$. In
this case $u_{b_q}$ and $u_{b'_{q'}}$ appear in $\pi(M)=\pi(M')$.
Clearly $b_q>b'_{q'}$ or $b_q<b'_{q'}$, as otherwise $M$ and $M'$
would have a common variable $x_{b_{q},c_{q}}$. If
$b_q>b'_{q'}(>\dots>b'_1)$ then $b_q=a'$ as $u_{b_q}$ appears in
$\pi(M')$. This forces that $b'_{q'}<b_q=a'<c_q=c'_{q'}$ which
is impossible, since $x_jx_{i,k} \in in(\G_{D_{2m}})$. The other
case $b_q<b'_{q'}$ is impossible by a similar argument.
\end{proof}

\begin{rem}
Note that we have
$$|\mathcal{G}_{D_{2m}}|=2\binom{m-1}{4}+5 \binom{m-1}{3}+4\binom{m-1}{2}+\binom{m-1}{1}.$$
$\dim V_{D_{2m}}=2m$, $\codim V_{D_{2m}}=m-1+\binom{m-1}{2}$.
\end{rem}

\smallskip

\noindent {\bf When $n=2m+1$ :} Let $J=\{2,4,\dots,n-1\}$ and
$J^c=\{3,5,\dots,n-2\}$. Consider the subset $D_{2m+1}$ defined by
$$\dis \{2\e_i,\e_j,4\e_{1},4\e_{n}, \e_k+\e_{\ell},\e_{1}+\e_n,
\e_i+2\e_{1},\e_i+2\e_{n}, \e_i+3\e_{1}+\e_n, \e_i+\e_{1}+3\e_n$$ 
$$\,|\, i,k,\ell \in J, \,j\in J^c \,\, \mbox{and} \,\, k<\ell \},$$
where $\{\e_1,\dots,\e_n\}$ is the canonical basis of $\Z^n$. As before we introduce one variable for each member of $D_{2m+1}$ and define the polynomial ring $K[D_{2m+1}]$ to be the $K$-algebra generated by
the set
$$\dis \{x_1,\dots,x_n, x_{j,k},x_{1,n},x_{i,1},x_{i,n},y_{i,1},y_{i,n}\;|
\,\, \mbox{where}\,\, i,j,k \in J \,\, \mbox{and} \,\, j<k \}$$
and the semigroup ring $K[\N D_{2m+1}]$ to be the $K$-algebra
generated by $$\dis \{u_i^2, u_j, u_{1}^4, u_{n}^4,
u_ku_{\ell},u_{1}u_n,
u_iu_{1}^2,u_iu_{n}^2,u_iu_{1}^3u_{n},u_iu_{1}u_{n}^3 \,|\,
i,k,\ell \in J, \,j\in J^c \,\, \mbox{and} \,\, k<\ell \}.$$

\noindent The toric ideal $I_{D_{2m+1}}$ is thus the kernel of $\pi:K[D_{2m+1}]\rightarrow K[\N D_{2m+1}]$ which is defined as follows:
$$\pi(x_{i})=u_i^2, \; \pi(x_{j})=u_j, \;\pi(x_{1})=u_{1}^4, \; \pi(x_{n})=u_{n}^4,
\;\pi(x_{k,\ell})=u_ku_{\ell}, \; \pi(x_{1,n})=u_{1}u_{n}\\$$
$$\pi(x_{i,1})=u_iu_{1}^2, \; \pi(x_{i,n})=u_{i}u_{n}^2, \;
\pi(y_{i,1})=u_iu_{1}^3u_{n}, \; \pi(y_{i,n})=u_iu_{1}u_{n}^3$$
for all $i,k,\ell \in J, \,j\in J^c$ with $k<\ell$.

Finally, we define the ordering $\succ^{odd}$ to be the reverse lexicographic
ordering imposed by:
$$y_{i_1,1}\succ y_{i_2,1} \succ y_{i_1,n}\succ y_{i_2,n} \succ x_{1} \succ \dots \succ x_n \succ$$ $$\succ x_{j_1,j_2}\succ x_{j_3,j_4} \succ x_{k_1,1}\succ x_{k_2,1}\succ x_{\ell_1,n}\succ x_{\ell_2,n}\succ x_{1,n}$$
where $j_1,j_2,j_3,j_4,k_1,k_2,\ell_1,\ell_2 \in J$ with $j_2 <
j_4$ or $j_2=j_4, \;j_1<j_3$ and $k_1<k_2$ and $\ell_1<\ell_2$.

Then a squarefree initial ideal for $I_{D_{2m+1}}$ is given by the
following theorem as the first monomials are the initial terms with respect to the ordering $\succ^{odd}$.

\begin{thm} \label{oddaffine}
The following set $\mathcal{G}_{D_{2m+1}}$
$$\begin{array}{lllll}
&x_{i,k}x_{j,\ell}-x_{i,j}x_{k,\ell} \qquad  &x_{i,\ell}x_{j,k}-x_{i,j}x_{k,\ell}  \qquad &i<j<k<\ell \in J\\
&x_{j,k}x_{i,n-1}-x_{i,j}x_{k,n-1}\qquad &x_{i,k}x_{j,n-1}-x_{i,j}x_{k,n-1}\qquad  &i<j<k\in J\\
&x_{j,k}x_{i,n}-x_{i,j}x_{k,n}\qquad &x_{i,k}x_{j,n}-x_{i,j}x_{k,n}\qquad & i<j<k\in J\\
&x_{j}x_{i,k}-x_{i,j}x_{j,k}\qquad &x_{i,j}x_{i,k}-x_{i}x_{j,k}\qquad & i<j<k\in J\\
&x_{k}x_{i,j}-x_{i,k}x_{j,k}\qquad & \quad & i<j<k\in J\\
&x_ix_j-x_{i,j}^2\qquad &x_{i,j}x_{1}-x_{i,n-1}x_{j,n-1}\qquad &i<j\in J\\
&x_{i,j}x_{n}-x_{i,n}x_{j,n}\qquad &x_{i,j}x_{i,n-1}-x_ix_{j,n-1}\qquad & i<j\in J\\
&x_{i,j}x_{i,n}-x_ix_{j,n}\qquad &x_{j}x_{i,n-1}-x_{i,j}x_{j,n-1}\qquad & i<j\in J\\
&x_{j}x_{i,n}-x_{i,j}x_{j,n}\qquad & x_{j,n-1}x_{i,n}-x_{i,n-1}x_{j,n}\qquad &i<j\in J\\
&x_{i,n-1}x_{j,n}-x_{1,n}^2x_{i,j}\qquad & \quad &i<j\in J\\
&x_ix_{1}-x_{i,n-1}^2\qquad & x_ix_{n}-x_{i,n}^2\qquad &i\in J\\
&x_{i,n-1}x_{i,n}-x_{1,n}^2x_i\qquad & x_{i,n}x_{1}-x_{1,n}^2x_{i,n-1}\qquad & i\in J\\
&y_{i,1}-x_{1,n}x_{i,1}\qquad &y_{i,n}-x_{1,n}x_{i,n} \qquad & i\in J\\
&x_{i,n-1}x_{n}-x_{1,n}^2x_{i,n}\qquad &x_{1}x_{n}-x_{1,n}^4 \qquad & i\in J\\
\end{array}$$ is a Gr\"obner basis of
$I_{\mathcal{D}_{2m+1}}$ with respect to the ordering $\succ^{odd}$.
\end{thm}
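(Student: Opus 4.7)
The plan is to follow the template of the proof of Theorem~\ref{evenaffine}: by Lemma~\ref{lem}, it suffices to show that any two standard monomials $M, M' \notin in(\mathcal{G}_{D_{2m+1}})$ with $\pi(M) = \pi(M')$ coincide. The first task is to describe the shape of a standard monomial by reading off the initial terms of $\mathcal{G}_{D_{2m+1}}$. The generators $y_{i,1} - x_{1,n}x_{i,1}$ and $y_{i,n} - x_{1,n}x_{i,n}$ force that $y_{i,1}, y_{i,n}$ never occur in a standard monomial; the generator $x_ix_j - x_{i,j}^2$ allows at most one index $a \in J$ to contribute a factor $x_a^p$; and the generators $x_1x_n - x_{1,n}^4$, $x_ix_1 - x_{i,n-1}^2$, $x_ix_n - x_{i,n}^2$, $x_{i,j}x_1 - x_{i,n-1}x_{j,n-1}$, $x_{i,j}x_n - x_{i,n}x_{j,n}$, $x_{i,n-1}x_n - x_{1,n}^2x_{i,n}$, $x_{i,n}x_1 - x_{1,n}^2x_{i,n-1}$, and $x_{i,n-1}x_{i,n} - x_{1,n}^2x_i$ impose a long list of mutual exclusions among $x_1, x_n, x_a, x_{i,j}, x_{i,n-1}$ and $x_{i,n}$. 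Exactly as in the even case, the pair-variables $x_{b_s,c_s}$ with $b_s, c_s \in J$ satisfy $b_1 < c_1 \leq b_2 < c_2 \leq \cdots$, since the relations $x_{i,k}x_{j,\ell}, x_{i,\ell}x_{j,k}, x_{i,j}x_{i,k}$ are still initial.

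With the resulting canonical shape
$$M = x_a^p\, x_1^{\alpha_1}\, x_n^{\alpha_n}\, x_{1,n}^\gamma\, \prod_s x_{b_s,c_s}\, \prod_s x_{e_s,1}\, \prod_s x_{f_s,n}$$
fixed (subject to the exclusion constraints), and an analogous expression for $M'$, I would compute $\pi(M)$ and $\pi(M')$ and equate the exponents of $u_1$, $u_n$, and $u_j$ for each $j \in J \cup J^c$. This produces a system of linear identities in $\alpha_1, \alpha_n, \gamma, p, q, r, r'$ and their primed counterparts, generalising $(\ref{e9})$--$(\ref{e12})$. Assuming $M \neq M'$ share no variable, I would do casework according to which of $\alpha_1, \alpha_n, p, q, r, r', \gamma$ vanish, paralleling Cases~I--II of Theorem~\ref{evenaffine}; in the purely pair-variable case, locating the largest $J$-index appearing in $\pi(M) = \pi(M')$ forces a common $x_{b_q,c_q}$, a contradiction.

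The main obstacle will be the bookkeeping of the $u_1$- and $u_n$-exponents, which in the odd case aggregate contributions of several different weights: $4\alpha_1$ from $x_1$, a single $\gamma$ from $x_{1,n}$ on each side, a $2$ per factor $x_{e_s,1}$ on the $u_1$-side, a $2$ per $x_{f_s,n}$ on the $u_n$-side, and further mixed contributions from $x_{i,n-1}$ and $x_{i,n}$. The delicate subcase is when one side uses $x_1^{\alpha_1}$ or $x_{i,n-1}$-factors while the other uses $x_{1,n}^{\gamma'}$ and $x_{e_s,1}$-factors to supply the same $u_1$-weight; here I expect to combine the exclusion relations with the linear identities to deduce $\gamma = \gamma'$ and $\alpha_1 = \alpha_1'$ (and symmetrically on the $u_n$-side), reducing each side to its pair-variable block, at which point the largest-index argument from the even case closes the proof.
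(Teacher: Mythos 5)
Your plan is correct and follows essentially the same route as the paper's proof: reduce to Lemma~\ref{lem}, normalize standard monomials into the canonical shape $x_a^p x_1^{\alpha_1} x_n^{\alpha_n} x_{1,n}^{\beta}\prod x_{b,c}\prod x_{d,n-1}\prod x_{e,n}$, equate exponents of $u_1$, $u_n$ and the $u_j$ to get linear identities generalising $(\ref{e9})$--$(\ref{e12})$, and then do casework on vanishing of $\alpha_1,\alpha_n,r,s,\beta$ using the mutual-exclusion initial terms before closing with the largest-index argument. The ``delicate subcase'' you flag is resolved in the paper exactly as you anticipate: the exclusions force $\alpha_1=\alpha_n=0$ on both sides first, then $r=s=r'=s'=0$, and only then does $(\ref{e2})$ give $\beta=\beta'$, hence $\beta=\beta'=0$ since $M$ and $M'$ share no variable.
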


\begin{proof} Let $M$ and $M'$ be two monomials in $K[D_{2m+1}]$ with $M \notin in(\mathcal{G}_{D_{2m+1}})$
and $M' \notin in(\mathcal{G}_{D_{2m+1}})$, where
$in(\G_{D_{2m+1}})$ is the monomial ideal generated by initial
terms of binomials in $\G_{D_{2m+1}}$. Since $y_{i,1}, y_{i,n},
x_ix_j \in in(\G_{D_{2m+1}})$, we may assume that
\begin{eqnarray*}
M &=&
x_{a}^px_{1}^{\alpha_{1}}x_{n}^{\alpha_n}x_{1,n}^{\beta}x_{b_1,c_1}\cdots
x_{b_q,c_q} x_{d_1,n-1}\cdots x_{d_r,n-1} x_{e_1,n}\cdots
x_{e_s,n} \quad \mbox{and} \quad\\
M'&=&x_{a'}^{p'}x_{1}^{\alpha'_{1}}x_{n}^{\alpha'_n}x_{1,n}^{\beta'}x_{b'_1,c'_1}\cdots
x_{b'_{q'},c'_{q'}} x_{d'_1,n-1}\cdots x_{d'_{r'},n-1}
x_{e'_1,n}\cdots x_{e'_{s'},n},
\end{eqnarray*} where the variables are ordered
with respect to
$$x_{a}\succ x_{b_1,c_1} \succ \cdots \succ x_{b_q,c_q}\succ x_{d_1,n-1}
\succ \cdots \succ x_{d_r,n-1}\succ x_{e_1,n} \succ \cdots \succ
x_{e_s,n},$$
$$x_{a'}\succ x_{b'_1,c'_1} \succ \cdots \succ x_{b'_{q'},c'_{q'}}\succ
x_{d'_1,n-1} \succ \cdots \succ x_{d'_{r'},n-1}\succ x_{e'_1,n}
\succ \cdots \succ x_{e'_{s'},n}.$$ First, we observe that the ordering above implies that $c_1 \leq
\dots \leq c_q$, $c'_1 \leq \dots \leq c'_{q'}$, $d_1 \leq
\dots \leq d_r$, $d'_1 \leq \dots \leq d'_{r'}$ and $e_1 \leq
\dots \leq e_r$, $e'_1 \leq \dots \leq e'_{r'}$. Moreover, we have
$b_1<c_1 \leq b_2<c_2 \leq \dots \leq b_q<c_q$  and $b'_1<c'_1
\leq b'_2<c'_2 \leq \dots \leq b'_{q'}<c'_{q'}$, since
$x_{i,k}x_{j,\ell}, x_{i,\ell}x_{j,k}, x_{i,j}x_{i,k} \in
in(\G_{D_{2m+1}})$.

The images of $M$ and $M'$ are
found as follows
\begin{eqnarray*}
\pi(M)&=&u_{a}^{2p}u_{1}^{4\alpha_{1}+\beta+2r}u_{n}^{4\alpha_n+\beta+2s}
u_{b_1}u_{c_1}\cdots u_{b_q}u_{c_q} u_{d_1}\cdots u_{d_r}u_{e_1}\cdots u_{e_s}\\
\pi(M')&=&u_{a'}^{2p'}u_{1}^{4\alpha'_{1}+\beta'+2r'}u_{n}^{4\alpha'_n+\beta'+2s'}
u_{b'_1}u_{c'_1}\cdots u_{b'_{q'}}u_{c'_{q'}} u_{d'_1}\cdots
u_{d'_{r'}}u_{e'_1}\cdots u_{e'_{s'}}.
\end{eqnarray*}

It follows from $\pi(M)=\pi(M')$ that we have
the following identities
\begin{eqnarray}
 \label{e1} 2p+2q+r+s &=& 2p'+2q'+r'+s' \\
  \label{e2} 4\alpha_{1}+\beta +2r &=& 4\alpha'_{1}+\beta' +2r' \\
  \label{e3}  4\alpha_{n}+\beta +2s &=& 4\alpha'_{n}+\beta' +2s'\\
   \label{e4} 2\alpha_{1}-2\alpha_{n}+r-s &=& 2\alpha'_{1}-2\alpha'_{n}+r'-s'  \: (\mbox{follows from} \: (\ref{e2}) \: \mbox{and} \: (\ref{e3})).
\end{eqnarray}

To accomplish our goal $M=M'$, we will assume contrarily that $M
\neq M'$ and obtain a contradiction in all possible cases
considered below. Since $M\neq M'$, we may suppose further that
they have no variable in common without loss of generality.

Since $x_{1}x_n \in in(\G_{D_{2m+1}})$, it follows that
$\alpha_{1}$ and $\alpha_n$ can not be positive simultaneously. If
$\alpha_{1}>0$ then $\alpha_n =0$ and $\alpha'_{1}=0$ immediately.
That $p=q=s=0$ follows respectively from $x_ix_{1}, x_{i,j}x_{1},
x_{i,n}x_{1}\in in(\G_{D_{2m+1}})$. Thus equations \ref{e1} and
\ref{e4} become
\begin{eqnarray*}
  r &=& 2p'+2q'+r'+s' \\
  2\alpha_{1}+r &=& -2\alpha'_{n}+r'-s'
\end{eqnarray*}

\noindent and we have $2\alpha_{1}=-2(p'+q'+s'+\alpha'_{n})\leq 0$,
contradiction. If $\alpha_{n}>0$ then clearly $\alpha'_n =0$ and
$\alpha_{1}=0$. That $p=q=r=0$ follows respectively from
$x_ix_{n}, x_{i,j}x_{n}, x_{i,n-1}x_{n}\in in(\G_{D_{2m+1}})$.
Thus equations \ref{e1} and \ref{e4} become
\begin{eqnarray*}
  s &=& 2p'+2q'+r'+s' \\
 -2\alpha_{n}-s &=& 2\alpha'_{1}+r'-s'
\end{eqnarray*}

\noindent and we have $2\alpha_{n}=-2(p'+q'+r'+\alpha'_{1})\leq
0$, contradiction. So, both $\alpha_1=\alpha_{n}=0$. One can show
that $\alpha'_{1}=0$ and $\alpha'_{n}=0$ by a similar argument.

Now, $x_{j,n-1}x_{i,n}, x_{i,n-1}x_{j,n}, x_{i,n-1}x_{i,n} \in
in(\G_{D_{2m+1}})$ implies that $r$ and $s$ (resp. $r'$ and $s'$) can not be positive at the same time.

If $r>0$, then $s=0$ in which case equation \ref{e4} becomes $r=r'-s'$. If $r'>0$, then $s'=0$ and we have $r=r'>0$, which is impossible as in this case,
$d_r$ would be equal to $d'_{r'}$ since these are the biggest indices of variables in $M$ and $M'$, $x_{d_r}$ would be a common variable.
If $s'>0$, then $r'=0$ and we have $r=-s'$, contradiction as $r>0$ and $s'>0$.

If $s>0$, then $r=0$ in which case equation \ref{e4} becomes
$-s=r'-s'$. If $r'>0$, then $s'=0$ and we have $-s=r'$, which
contradicts the assumption that $s>0$ and $r'>0$. If $s'>0$, then
$r'=0$ and we have $s=s'>0$, which is impossible as in this case
$e_s$ would be $e'_{s'}$ and since these are the biggest indices
of variables in $M$ and $M'$, $x_{e_s}$ would be a common
variable.

Hence, $r=s=0$ and this implies together with equation \ref{e4}
that $r'=s'$. Since they can not be positive simultaneously,
$r'=s'=0$ as well. After all these observations, equation \ref{e2}
reveals that $\beta=\beta'$. Since $M$ and $M'$ have no common
variable, it follows that $\beta=\beta'=0$.

If $q=0$ then $\pi(M)=\pi(M')$
implies that $u_{a}^{2p}= u_{a'}^{2p'}u_{b'_1}u_{c'_1}\cdots
u_{b'_{q'}}u_{c'_{q'}}$, which is possible only if $q'=0$ as
$b'_{q'}<c'_{q'}$. But in this case $a=a'$ and $x_{a}$ is a
common variable of $M$ and $M'$, a contradiction. Similarly, $q'=0$ gives rise to a contradiction. Thus $q>0$ and
$q'>0$.

Since $x_j x_{i,k}, x_k x_{i,j} \in in(\G_{D_{2m+1}})$, we have
$a \leq c_q$ and $a' \leq c'_{q'}$.  Since $b_q < c_q$ and
$b'_{q'} < c'_{q'}$, we observe that $c_q$ (resp. $c'_{q'}$) is
the biggest index appearing in $\pi(M)$ (resp. $\pi(M')$) which
yields together with $\pi(M)=\pi(M')$ that $c_q = c'_{q'}$. In
this case $u_{b_q}$ and $u_{b'_{q'}}$ appear in $\pi(M)=\pi(M')$.
Clearly $b_q>b'_{q'}$ or $b_q<b'_{q'}$, as otherwise $M$ and $M'$
would have a common variable $x_{b_{q},c_{q}}$. If
$b_q>b'_{q'}(>\dots>b'_1)$ then $b_q=a'$ as $u_{b_q}$ appears in
$\pi(M')$. This forces that $b'_{q'}<b_q=a'<c_q=c'_{q'}$ which
is impossible, since $x_jx_{i,k} \in in(\G_{D_{2m+1}})$. The other
case $b_q<b'_{q'}$ is impossible by a similar argument.\end{proof}

\begin{rem}
Note that if $n=2m+1$ we have,
\begin{center}
$\dis |\mathcal{G}_{D_{2m+1}}|=2\binom{m}{4}+7
\binom{m}{3}+9\binom{m}{2}+7 \binom{m}{1}+\binom{m}{0}.$
\end{center}

$\dim V_{D_{2m+1}}=2m+1$, $\codim V_{D_{2m+1}}=2m+1+\binom{m}{2}$.

\end{rem}

\subsection{$E_n$-type Singularities} We will give Gr\"obner
bases of toric ideals $I_{\mathcal{E}_n}$, where $n=6,7,8$, without proofs,
as they can easily be checked by a computation in Cocoa
\cite{cocoa}. To begin with, let us define the set $\mathcal{E}_6 \subset
\Z^6$:
$$\{3\e_1,3\e_2,\e_3,3\e_4,3\e_5,\e_6,\e_1+\e_2,\e_1+\e_5,\e_2+\e_4,\e_4+\e_5,2\e_2+\e_5,\e_2+2\e_5,2\e_1+\e_4,\e_1+2\e_4\}.$$
Let $K[\mathcal{E}_6]$ be the polynomial ring $K[x_1,\dots,x_{14}]$ with $14$ variables and $K[\N \mathcal{E}_6]$ be the semigroup ring generated
over $K$ by monomials $u^{\a}$ with $\a \in \mathcal{E}_6$. Then, as before, the toric ideal $I_{\mathcal{E}_6}$ is the kernel of the epimorphism defined by sending the $i$-th variable $x_i$ to $u^{\a_i}$, where $\a_i$ denotes the $i$-th element in $\mathcal{E}_6$, for all $i=1,\dots,14$. Similarly, we define the set $\mathcal{E}_7 \subset \Z^7$:
$$\{\e_1,\e_2,\e_3,2\e_4,\e_5,2\e_6,2\e_7,\e_4+\e_6,\e_4+\e_7,\e_6+\e_7\}.$$
Again, $K[\mathcal{E}_7]$ denotes the polynomial ring $K[x_1,\dots,x_{10}]$ with $10$ variables and $K[\N \mathcal{E}_7]$ be the semigroup ring generated
over $K$ by monomials $u^{\a}$ with $\a \in \mathcal{E}_7$. Thus, the toric ideal $I_{\mathcal{E}_7}$ is the kernel of the epimorphism defined by sending the $i$-th variable $x_i$ to $u^{\a_i}$, where $\a_i$ denotes the $i$-th element in $\mathcal{E}_7$, for all $i=1,\dots,10$. Finally, the set $\mathcal{E}_8 \subset \Z^8$ is defined as $\{\e_1,\dots,\e_8\}$.

\begin{thm} \label{En}  With the notations above we have the following:
\begin{enumerate}
\item A Gr\"obner basis for $I_{\mathcal{E}_6}$ with respect to lexicographic ordering with $x_1>x_2>x_3>x_4>x_5>x_6>x_{11}>x_{12}>x_{13}>x_{14}>x_7>x_8>x_9>x_{10}$ is given by
$$
\begin{array}{lllll}

&x_7x_{10}-x_8x_9, 				\quad& x_{13}x_{10}-x_{14}x_8,			\quad& x_{13}x_9-x_{14}x_7,			\quad& x_{12}x_{14}-x_8x_9x_{10},\\

&x_{12}x_{13}-x_8^2x_9, 	\quad& x_{11}x_{10}-x_{12}x_9,			\quad& x_{11}x_8-x_{12}x_7,			\quad& x_{11}x_{14}-x_8x_9^2,\\

&x_{11}x_{13}-x_7x_8x_9,  \quad& x_5x_9-x_{12}x_{10}, 				\quad& x_5x_7-x_{12}x_8, 				\quad& x_5x_{14}-x_8x_{10}^2,\\

&x_5x_{13}-x_8^2x_{10},   \quad& x_5x_{11}-x_{12}^2,					\quad& x_4x_8-x_{14}x_{10},			\quad& x_4x_7-x_{14}x_9,\\

&x_4x_{13}-x_{14}^2,	    \quad& x_4x_{12}-x_9x_{10}^2,				\quad& x_4x_{11}-x_9^2x_{10},		\quad& x_4x_5-x_{10}^3,\\

&x_2x_{10}-x_{11}x_9, 		\quad& x_2x_8-x_{11}x_7,						\quad& x_2x_{14}-x_7x_9^2,			\quad& x_2x_{13}-x_7^2x_9,\\

&x_2x_{12}-x_{11}^2,			\quad& x_2x_5-x_{11}x_{12},					\quad& x_2x_4-x_9^3,						\quad& x_1x_{10}-x_{13}x_8,\\

&x_1x_9-x_{13}x_7,				\quad& x_1x_{14}-x_{13}^3,					\quad& x_1x_{11}-x_7x_8^2,			\quad& x_1x_{11}-x_7^2x_8,\\

&x_1x_5-x_8^3,						\quad& x_1x_4-x_{13}x_{14},					\quad& x_1x_2-x_7^3 .

\end{array}$$

 \item  A Gr\"obner basis for $I_{\mathcal{E}_7}$ with respect to lexicographic ordering with $x_1>x_2>x_3>x_4>x_5>x_6>x_7>x_8>x_9>x_{10}$ is given by the following binomials
$$x_7x_8-x_9x_{10}, \quad x_6x_9-x_8x_{10}, \quad
x_6x_7-x_{10}^2,\quad x_4x_{10}-x_8x_9,\quad x_4x_7-x_9^2,\quad
x_4x_6-x_8^2.$$

\item The toric ideal $I_{\mathcal{E}_8}=(0).$
\end{enumerate}

\end{thm}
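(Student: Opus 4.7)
The plan is to address the three parts separately, with part (3) being immediate and parts (1)--(2) following the template established in Theorems \ref{evenaffine} and \ref{oddaffine}. For (3), since $\mathcal{E}_8 = \{\e_1,\dots,\e_8\}$ is precisely the canonical basis of $\Z^8$, the defining $K$-algebra epimorphism $\pi$ sends $x_i \mapsto u_i$ and is therefore an isomorphism of polynomial rings; hence $I_{\mathcal{E}_8} = \ker \pi = (0)$, and the empty set is trivially a Gr\"obner basis.

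For parts (1) and (2), I would invoke Lemma \ref{lem} following the same scheme used in Theorems \ref{evenaffine} and \ref{oddaffine}. The first step is to verify, for each listed binomial and the prescribed lexicographic order, that the first monomial is indeed the leading term; this pins down $in(\mathcal{G}_{\mathcal{E}_n})$ explicitly. Next, the standard monomials (those outside $in(\mathcal{G}_{\mathcal{E}_n})$) must be described by enumerating the forbidden products such as $x_7 x_{10}$, $x_4 x_5$, $x_1 x_2$, and so on. Finally, for any pair of distinct standard monomials $M \ne M'$ with $\pi(M) = \pi(M')$, one cancels common variables (legitimate since $in(\mathcal{G}_{\mathcal{E}_n})$ is a monomial ideal) and compares the $u_j$-exponents on each side to derive a contradiction across every configuration of remaining variables.

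The main obstacle, in contrast to the $D_n$ situation, is that the $\mathcal{E}_n$ configurations lack the uniform indexing that produced clean monomial parametrizations in the previous theorems. Irregular vectors such as $3\e_1$, $3\e_2$, $2\e_2+\e_5$, and $\e_1+2\e_4$ in $\mathcal{E}_6$ force an unstructured case-by-case analysis with no evident closed-form pattern for the standard monomials, so the elegant indexed argument used for $D_{2m}$ and $D_{2m+1}$ does not transfer cleanly. For this reason the authors suggest computer algebra verification, and a clean implementation is via Buchberger's criterion: for each pair $f, g$ in the proposed $\mathcal{G}_{\mathcal{E}_n}$, compute the $S$-polynomial $S(f,g)$ and check that its normal form modulo $\mathcal{G}_{\mathcal{E}_n}$ is zero. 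Because both generating sets are finite and modest in size (roughly $37$ binomials for $\mathcal{E}_6$ and only $6$ for $\mathcal{E}_7$), this check is straightforward in CoCoA, which is precisely the verification route the authors indicate.
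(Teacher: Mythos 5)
The paper gives no written proof of this theorem at all: it states the Gr\"obner bases ``without proofs, as they can easily be checked by a computation in Cocoa,'' which is exactly the verification route you propose (Buchberger's criterion in CoCoA for $\mathcal{E}_6$ and $\mathcal{E}_7$, and the immediate observation that $\mathcal{E}_8$ is the canonical basis so $\pi$ is an isomorphism). Your proposal is correct and takes essentially the same approach as the paper, with the minor added value of spelling out why part (3) is trivial and why the uniform combinatorial argument of the $D_n$ cases does not transfer.
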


\section{Minimal generating sets} \label{sec04}

In this part, using ~\cite{charalambous-katsabekis-thoma} we show that the Gr\"obner bases obtained in the previous section are in fact minimal generating sets for each toric ideal. This will be achieved as follows.

Since our semigroups $\N\A$ are pointed, there is a partial order on them given by 

\begin{center}
$\mathbf{c} \leq \mathbf{d} \Leftrightarrow$ there is a
$\mathbf{c'} \in \N\A$ such that $\mathbf{c} +
\mathbf{c'}=\mathbf{d}$.
\end{center}

As $I_{\A}$ is  generated by binomials $x_\mathbf{a}-x_\mathbf{b}$ with $\pi(x_\mathbf{a})=\pi(x_\mathbf{b})$, $x_\mathbf{a}$ and $x_\mathbf{b}$ will have the same $\A$-\textit{degree}. Recall that for $\mathbf{p}=(p_1,\ldots,p_N)\in \N^N$, the $\A$-\textit{degree} of the monomial $x^\mathbf{p} := x_1^{p_1}\dots x_N^{p_N}$ is $\deg_{\A}(x^\mathbf{p})= p_1\mathbf{a}_1+\dots+p_N\mathbf{a}_N\in \N \A$. A vector $\mathbf{b}\in \N\A$ is called a \textit{Betti} $\A$-degree, if $I_{\A}$ has a minimal generating set containing an element of $\A$-degree $\mathbf{b}$. Since \textit{Betti} $\A$-degrees are independent of the minimal generating sets our Gr\"obner bases will determine all the candidate vectors $\mathbf{b}\in \N\A$.

For a vector $\textbf{b} \in \N\A$, $G(\textbf{b})$ is the graph
with vertices the elements of the fiber
$$deg_{\A}^{-1}(\textbf{b}) = \{x^\mathbf{p} \; | \; deg_{\A}(x^\mathbf{p})= \mathbf{b} \}$$
\noindent and edges all the sets $\{x^\mathbf{p}, x^\mathbf{q}\}$ , whenever
$x^\mathbf{p}-x^\mathbf{q} \in I_{\A,\mathbf{b}}$, where the ideal
$I_{\A,\mathbf{b}}$ is defined by $I_{\A,\mathbf{b}} = \langle
x^\mathbf{p}-x^\mathbf{p} \; |\; deg_{\A}(x^\mathbf{p})= deg_{\A}(x^\mathbf{q}) \lneqq
\textbf{b} \rangle$.

For each possible \textit{Betti} $\A$-degree $\textbf{b}$, we consider the complete graph $S_{\textbf{b}}$ with vertices
$G(\textbf{b})_{i}$, the connected components of $G(\textbf{b})$.
Let $T_{\textbf{b}}$ be a spanning tree of $S_{\textbf{b}}$. Then
$\mathcal{F}_{T_{\textbf{b}}}$ is the collection of binomials
$x^\mathbf{p}-x^\mathbf{q}$ corresponding to edges $\{x^\mathbf{p}, x^\mathbf{q}\}$ of
$T_{\textbf{b}}$ with $x^\mathbf{p} \in G(\textbf{b})_{i}$ and $x^\mathbf{q} \in
G(\textbf{b})_{j}$. We will use the following to show the
minimality of the generating sets given by the Gr\"obner bases presented in section 3.

\begin{thm}\cite[Theorem 2.6]{charalambous-katsabekis-thoma}.
$\mathcal{F}=\bigcup_{\mathbf{b} \in \N\A}
\mathcal{F}_{T_{\mathbf{b}}}$ is a minimal generating set of
$I_{\A}$.
\end{thm}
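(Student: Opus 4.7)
The plan is to exploit the natural $\N\A$-grading on $I_{\A}$ and reduce the question to a degree-by-degree analysis of the fiber graphs $G(\mathbf{b})$. Since $I_{\A}$ is generated by $\A$-homogeneous binomials, any minimal generating set can be chosen $\A$-homogeneous, so it suffices to count, for each $\mathbf{b}\in\N\A$, the minimum number of degree-$\mathbf{b}$ binomials one must append to $I_{\A,\mathbf{b}}$ in order to recover the degree-$\mathbf{b}$ part of $I_{\A}$.

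The crucial intermediate step I would prove is the combinatorial identification: two monomials $x^\mathbf{p}, x^\mathbf{q}$ of $\A$-degree $\mathbf{b}$ lie in the same connected component of $G(\mathbf{b})$ if and only if $x^\mathbf{p}-x^\mathbf{q} \in I_{\A,\mathbf{b}}$. The ``$\Rightarrow$'' direction follows by induction on the path length in $G(\mathbf{b})$, since each edge corresponds to a binomial of strictly smaller $\A$-degree multiplied by a monomial, and telescoping along the path exhibits $x^\mathbf{p}-x^\mathbf{q}$ as an element of $I_{\A,\mathbf{b}}$. The ``$\Leftarrow$'' direction requires induction on the pointed partial order on $\N\A$, by expanding any representation $x^\mathbf{p}-x^\mathbf{q}=\sum c_i x^{\mathbf{r}_i}(x^{\mathbf{p}_i}-x^{\mathbf{q}_i})$ with $\deg_\A(x^{\mathbf{p}_i})\lneqq\mathbf{b}$ into a concrete walk in $G(\mathbf{b})$.

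Granting this identification, the quotient $(I_{\A})_{\mathbf{b}}/(I_{\A,\mathbf{b}})_{\mathbf{b}}$ is a $K$-vector space of dimension $N(\mathbf{b})-1$, where $N(\mathbf{b})$ is the number of connected components of $G(\mathbf{b})$. Indeed, the $N(\mathbf{b})-1$ edges of the spanning tree $T_{\mathbf{b}}$ of $S_{\mathbf{b}}$ yield binomials in $\mathcal{F}_{T_{\mathbf{b}}}$ that are linearly independent modulo $(I_{\A,\mathbf{b}})_{\mathbf{b}}$ (deleting any tree edge leaves two components unjoined), and they span because any binomial of degree $\mathbf{b}$ joins two component representatives which are connected by a unique path in $T_{\mathbf{b}}$. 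Assembling these local bases over all $\mathbf{b}\in\N\A$, and invoking Nakayama-style reasoning (legitimate because the partial order on $\N\A$ is well-founded on the finite set of $\A$-degrees appearing in any fixed candidate generating set), one concludes that $\mathcal{F}=\bigcup_{\mathbf{b}}\mathcal{F}_{T_{\mathbf{b}}}$ is a minimal generating set of $I_{\A}$.

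The main obstacle is the inductive ``$\Leftarrow$'' step of the combinatorial identification. Expanding a general $K[\A]$-linear combination of lower-degree binomials into a single walk in $G(\mathbf{b})$ demands careful bookkeeping: the naive telescoping produces many terms that cancel only after reordering, and one must verify that the surviving monomial pairs are precisely the edges of $G(\mathbf{b})$ and that together they trace a connected path from $x^\mathbf{p}$ to $x^\mathbf{q}$. This is where the recursive definition of $I_{\A,\mathbf{b}}$ in terms of strictly smaller $\A$-degrees becomes essential, and where the pointedness of $\N\A$ is used to guarantee that the induction terminates.
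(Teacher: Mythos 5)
This statement is quoted verbatim from \cite[Theorem 2.6]{charalambous-katsabekis-thoma}; the paper offers no proof of it, so there is no internal argument to compare yours against. Judged on its own, your sketch is essentially the standard graded-Nakayama proof of that result and is correct: the degree-$\mathbf{b}$ piece $(I_{\A})_{\mathbf{b}}$ is the ``coefficients sum to zero'' subspace of the span of the fiber, the edge-differences of $G(\mathbf{b})$ span a subspace of codimension $N(\mathbf{b})-1$ in it, the spanning-tree binomials descend to a basis of the quotient, and the identification $(I_{\A,\mathbf{b}})_{\mathbf{b}}=(\mathfrak{m}I_{\A})_{\mathbf{b}}$ (which you should state explicitly; it uses that every graded piece of a toric ideal is spanned by the binomials of that degree, and that the semigroup is pointed so no generator has degree $0$) turns this count into the minimal number of generators in degree $\mathbf{b}$. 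One remark: the step you single out as the main obstacle, namely the ``$\Leftarrow$'' direction of the claim that $x^{\mathbf{p}}$ and $x^{\mathbf{q}}$ lie in the same component of $G(\mathbf{b})$ iff $x^{\mathbf{p}}-x^{\mathbf{q}}\in I_{\A,\mathbf{b}}$, is actually immediate with the definition used here: adjacency in $G(\mathbf{b})$ is \emph{defined} by membership of the difference in $I_{\A,\mathbf{b}}$, and that relation is already transitive, so components coincide with adjacency classes and no careful unravelling of a walk is required. (It would be required if edges were defined by single moves $x^{\mathbf{r}}(x^{\mathbf{p}_i}-x^{\mathbf{q}_i})$, which seems to be the picture you have in mind.) With these two small adjustments your plan goes through.
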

Notice that if $\mathbf{b}$ is not a $\textit{Betti}\, \A$-degree,
then $\mathcal{F}_{T_{\mathbf{b}}}=\emptyset$ and that the number of possible spanning trees determine the number of different minimal generating sets.

\subsection{Even Case  $\mathcal{D}_{2m}$} We consider the subset
$\mathcal{D}_{2m}$ defined by,
$$\dis \mathcal{D}_{2m}:=\{2\e_i, \e_j, 2\e_{1},2\e_{n}, \e_k+\e_{\ell},\e_i+\e_{1}+\e_n
\,|\, i,k, \ell \in J, \, j \in J^c \,\, \mbox{and} \,\, k<\ell \},$$ where
$J=\{3,5,\dots,n-1\}$ , $J^c=\{2,4,\dots,n-2\}$ and $\{\e_1,\dots,\e_n\}$ is the canonical basis of $\Z^n$ .
Recall that  the elements of $\mathcal{D}_{2m}$ are the $\mathcal{D}_{2m}$-degrees of the variables $x_i,\, x_j,\, x_1,\, x_n,\,x_{k,l}$ and $y_i$ respectively.

By Theorem 3.1, we see that $I_{\mathcal{D}_{2m}}$ is generated by the set $\mathcal{G}_{\mathcal{D}_{2m}}$\\[-3mm]
$$\begin{array}{lllll}
&x_{i,k}x_{j,\ell}-x_{i,j}x_{k,\ell} \quad &x_{i,\ell}x_{j,k}-x_{i,j}x_{k,\ell} \quad &i<j<k<\ell \in J\\
&x_{i,j}x_{i,k}-x_ix_{j,k} \quad &x_jx_{i,k}-x_{i,j}x_{j,k} \quad &i<j<k \in J \\
&x_kx_{i,j}-x_{i,k}x_{j,k} \quad & x_{j,k}y_i-x_{i,j}y_k \quad & i<j<k \in J \\
&x_{i,k}y_j-x_{i,j}y_k \quad & \quad & i<j<k \in J \\
&x_ix_j-x_{i,j}^2 \quad &x_jy_i-x_{i,j}y_j & i<j \in J\\
&x_{i,j}y_i-x_iy_j \quad &x_{i,j}x_{1}x_{n}-y_iy_j \quad & i<j \in J\\
&x_ix_{1}x_n-y_i^2 \quad & \quad & i \in J.
\end{array}$$

\noindent Therefore, possible \textit{Betti}
$\mathcal{D}_{2m}$-degrees are
$$\begin{array}{lllll}
& \mathbf{b_1}=2\e_i+2\e_j, \quad & \mathbf{b_2}=\e_i+\e_j+2\e_{1}+2\e_{n},\\
& \mathbf{b_3}=2\e_i+\e_j+\e_{1}+\e_{n},\quad & \mathbf{b_4}=\e_i+2\e_j+\e_{1}+\e_{n},\\
& \mathbf{b_5}=2\e_i+\e_j+\e_k,\quad & \mathbf{b_6}=\e_i+2\e_j+\e_k,\\
& \mathbf{b_7}=\e_i+\e_j+2\e_k,\quad & \mathbf{b_8}=\e_i+\e_j+\e_k+\e_{1}+\e_{n},\\
& \mathbf{b_9}=2\e_i+2\e_{1}+2\e_{n},\quad &
\mathbf{b_{10}}=\e_i+\e_j+\e_k+\e_{\ell}
\end{array}$$
Next we prove that these binomials constitute a minimal generating
set for $I_{ \mathcal{D}_{2m}}$.
\begin{prop}
The set $\mathcal{G}_{\mathcal{D}_{2m}}$ is a minimal generating set of
$I_{\mathcal{D}_{2m}}$.
\end{prop}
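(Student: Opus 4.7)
The plan is to apply Theorem 4.1 (the Charalambous--Katsabekis--Thoma criterion) to each of the ten candidate Betti $\mathcal{D}_{2m}$-degrees $\mathbf{b}_1,\ldots,\mathbf{b}_{10}$ listed above. For each $\mathbf{b}_i$ I need to verify: (a) the fiber $\deg^{-1}_{\mathcal{D}_{2m}}(\mathbf{b}_i)$ contains exactly the monomials appearing as endpoints of the binomials of $\mathcal{G}_{\mathcal{D}_{2m}}$ of degree $\mathbf{b}_i$; (b) these monomials sit in distinct connected components of $G(\mathbf{b}_i)$; and (c) the binomials of degree $\mathbf{b}_i$ in $\mathcal{G}_{\mathcal{D}_{2m}}$ select a spanning tree of the condensed graph $S_{\mathbf{b}_i}$.

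First I would carry out (a) by a direct enumeration: given the position support and the coordinates of $\mathbf{b}_i$, one solves the linear Diophantine system determining which products of generators of $\N\mathcal{D}_{2m}$ sum to $\mathbf{b}_i$. For $\mathbf{b}_1,\ldots,\mathbf{b}_7$ and $\mathbf{b}_9$ this yields exactly two monomials, namely the two terms of the corresponding binomial. For $\mathbf{b}_8$ the fiber is $\{\,y_ix_{j,k},\;y_jx_{i,k},\;y_kx_{i,j}\,\}$, and for $\mathbf{b}_{10}$ it is $\{\,x_{i,j}x_{k,\ell},\;x_{i,k}x_{j,\ell},\;x_{i,\ell}x_{j,k}\,\}$, each of size three.

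The key step is (b). The point is that in every one of these fibers the distinct monomials are pairwise variable-disjoint: any edge $x^{\mathbf{p}}-x^{\mathbf{q}}\in I_{\mathcal{D}_{2m},\mathbf{b}_i}$ would force $x^{\mathbf{p}}$ and $x^{\mathbf{q}}$ to share a common monomial factor that could be cancelled to yield a genuine relation of strictly smaller $\mathcal{D}_{2m}$-degree, which is not the case here. Hence each monomial of the fiber is its own connected component of $G(\mathbf{b}_i)$, so $S_{\mathbf{b}_i}$ has two vertices (respectively three, in the cases $\mathbf{b}_8$ and $\mathbf{b}_{10}$).

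It remains to verify (c). For the eight two-monomial fibers it is immediate, as $\mathcal{G}_{\mathcal{D}_{2m}}$ supplies precisely one binomial joining the two components. For $\mathbf{b}_8$ the two binomials $x_{j,k}y_i-x_{i,j}y_k$ and $x_{i,k}y_j-x_{i,j}y_k$ both meet the vertex $x_{i,j}y_k$, so together they span the triangle $S_{\mathbf{b}_8}$; an analogous observation with $x_{i,j}x_{k,\ell}$ as common vertex works for $\mathbf{b}_{10}$. Theorem 4.1 then yields minimality. The main obstacle is a careful execution of step (a), especially for $\mathbf{b}_8$ and $\mathbf{b}_{10}$, where one must rule out ``mixed'' decompositions involving $x_1$, $x_n$, several $x_{k,\ell}$'s, or the $y$-variables, which do not actually sum to the prescribed degree.
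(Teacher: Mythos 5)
Your argument follows the paper's proof essentially step for step: both identify the candidate Betti degrees from $\mathcal{G}_{\mathcal{D}_{2m}}$, observe that each fiber consists of two (respectively three, for $\mathbf{b}_8$ and $\mathbf{b}_{10}$) pairwise variable-disjoint monomials so that the connected components of $G(\mathbf{b})$ are singletons, and then apply the Charalambous--Katsabekis--Thoma criterion, with the binomials of $\mathcal{G}_{\mathcal{D}_{2m}}$ supplying a spanning tree (an edge, or a path through the common vertex $x_{i,j}y_k$, resp.\ $x_{i,j}x_{k,\ell}$) of each complete graph $S_{\mathbf{b}}$. The only cosmetic difference is that the paper additionally invokes Corollary 2.10 of that reference to label the two-component cases as indispensable binomials, whereas your coprimality justification for why the components are singletons is, if anything, slightly more explicit than the paper's.
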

\begin{proof}
Since there is no binomial in $I_{\mathcal{D}_{2m},\mathbf{b_1}}$,
$G(\mathbf{b_1})$ consists of two connected components
$\{x_ix_j\}$ and $\{x_{i,j}^2\}$. Similarly, $G(\mathbf{b_2})$ has
$\{x_{i,j}x_{1}x_{n}\}$ and $\{y_iy_j\}$, $G(\mathbf{b_3})$ has
$\{x_{i,j}y_i\}$ and $\{x_iy_j\}$, $G(\mathbf{b_4})$ has
$\{x_jy_i\}$ and $\{x_{i,j}y_j\}$, $G(\mathbf{b_5})$ has
$\{x_{i,j}x_{i,k}\}$ and $\{x_ix_{j,k}\}$, $G(\mathbf{b_6})$ has
$\{x_jx_{i,k}\}$ and $\{x_{i,j}x_{j,k}\}$, $G(\mathbf{b_7})$ has
$\{x_kx_{i,j}\}$ and $\{x_{i,k}x_{j,k}\}$, $G(\mathbf{b_9})$ has
$\{x_ix_{1}x_n\}$ and $\{y_i^2\}$ as its connected components.

By Corollary $2.10$ in \cite{charalambous-katsabekis-thoma}, these
graphs determine all indispensable binomials of $I_{\mathcal{D}_{2m}}$.
Since these binomials are indispensable, they must belong to any
minimal generating set. Let us find the other binomials needed to
obtain a minimal generating set for $I_{\mathcal{D}_{2m}}$.

$G(\mathbf{b_8})$ and $G(\mathbf{b_{10}})$ have three connected
components:
$\{x_{i,j}y_k\}\cup\{x_{j,k}y_i\}\cup\{x_{i,k}y_j\}$ and
$\{x_{i,j}x_{k,\ell}\}\cup\{x_{i,k}x_{j,\ell}\}\cup\{x_{i,\ell}x_{j,k}\}$,
respectively. Since each connected component of these graphs is a
singleton, the complete graphs $S_{\mathbf{b_{8}}}$ and
$S_{\mathbf{b_{10}}}$  are triangles obtained by joining connected
components of $G(\mathbf{b_{8}})$ and $G(\mathbf{b_{10}})$,
respectively. Thus, spanning trees of these complete graphs can be
obtained by deleting one edge from the triangle.

Therefore, in a minimal generating set only one of the following
three binomial couples may appear corresponding to
$G(\mathbf{b_{8}})$;
\begin{center}
$x_{i,j}y_k-x_{j,k}y_i$ and $x_{i,j}y_k-x_{i,k}y_j$, or\\
$x_{j,k}y_i-x_{i,j}y_k$ and $x_{j,k}y_i-x_{i,k}y_j$, or\\
$x_{i,k}y_j-x_{i,j}y_k$ and $x_{i,k}y_j-x_{j,k}y_i$
\end{center}
and similarly for $G(\mathbf{b_{10}})$;

\begin{center}
$x_{i,j}x_{k,\ell}-x_{i,k}x_{j,\ell}$ and $x_{i,j}x_{k,\ell}-x_{i,\ell}x_{j,k}$, or\\
$x_{i,k}x_{j,\ell}-x_{i,j}x_{k,\ell}$ and $x_{i,k}x_{j,\ell}-x_{i,\ell}x_{j,k}$, or\\
$x_{i,\ell}x_{j,k}-x_{i,j}x_{k,\ell}$ and $x_{i,\ell}x_{j,k}-x_{i,k}x_{j,\ell}$.
\end{center}
\noindent Hence, there are many different minimal generating sets for the toric ideal $I_{\mathcal{D}_{2m}}$, and in particular the set $\mathcal{G}_{\mathcal{D}_{2m}}$ is a minimal generating set of $I_{\mathcal{D}_{2m}}$.
\end{proof}

\subsection{Odd Case  $\mathcal{D}_{2m+1}$} In this case, we
consider the set $\mathcal{D}_{2m+1} \subset \Z^{n}$ given by
\begin{center}
$\dis \{2\e_i,\e_j,4\e_{1},4\e_{n}, \e_k+\e_{\ell},\e_{1}+\e_n,
\e_i+2\e_{1},\e_i+2\e_{n}, \e_i+3\e_{1}+\e_n, \e_i+\e_{1}+3\e_n$ \\
\vspace{.3cm}$|\, i,k,\ell \in J, \,j\in J^c \,\, \mbox{and} \,\, k<\ell \},$
\end{center}

\noindent where  $J=\{2,4,\dots,n-1\}$, $J^c=\{3,5,\dots,n-2\}$ and $\{\e_1,\dots,\e_n\}$ is the canonical basis of $\Z^n$.
Again, the $\mathcal{D}_{2m+1}$-degrees of the variables are exactly the elements of $\mathcal{D}_{2m+1}$ as before.

By Theorem 3.3, we see that $I_{\mathcal{D}_{2m+1}}$ is generated by the set $\mathcal{G}_{\mathcal{D}_{2m+1}}$ of binomials

$$\begin{array}{lllll}
&x_{i,k}x_{j,\ell}-x_{i,j}x_{k,\ell} \qquad  &x_{i,\ell}x_{j,k}-x_{i,j}x_{k,\ell}  \qquad &i<j<k<\ell \in J\\
&x_{j,k}x_{i,n-1}-x_{i,j}x_{k,n-1}\qquad &x_{i,k}x_{j,n-1}-x_{i,j}x_{k,n-1}\qquad  &i<j<k\in J\\
&x_{j,k}x_{i,n}-x_{i,j}x_{k,n}\qquad &x_{i,k}x_{j,n}-x_{i,j}x_{k,n}\qquad & i<j<k\in J\\
&x_{j}x_{i,k}-x_{i,j}x_{j,k}\qquad &x_{i,j}x_{i,k}-x_{i}x_{j,k}\qquad & i<j<k\in J\\
&x_{k}x_{i,j}-x_{i,k}x_{j,k}\qquad & \quad & i<j<k\in J\\
&x_ix_j-x_{i,j}^2\qquad &x_{i,j}x_{1}-x_{i,n-1}x_{j,n-1}\qquad &i<j\in J\\
&x_{i,j}x_{n}-x_{i,n}x_{j,n}\qquad &x_{i,j}x_{i,n-1}-x_ix_{j,n-1}\qquad & i<j\in J\\
&x_{i,j}x_{i,n}-x_ix_{j,n}\qquad &x_{j}x_{i,n-1}-x_{i,j}x_{j,n-1}\qquad & i<j\in J\\
&x_{j}x_{i,n}-x_{i,j}x_{j,n}\qquad & x_{j,n-1}x_{i,n}-x_{i,n-1}x_{j,n}\qquad &i<j\in J\\
&x_{i,n-1}x_{j,n}-x_{1,n}^2x_{i,j}\qquad & \quad &i<j\in J\\
&x_ix_{1}-x_{i,n-1}^2\qquad & x_ix_{n}-x_{i,n}^2\qquad &i\in J\\
&x_{i,n-1}x_{i,n}-x_{1,n}^2x_i\qquad & x_{i,n}x_{1}-x_{1,n}^2x_{i,n-1}\qquad & i\in J\\
&y_{i,1}-x_{1,n}x_{i,1}\qquad &y_{i,n}-x_{1,n}x_{i,n} \qquad & i\in J\\
&x_{i,n-1}x_{n}-x_{1,n}^2x_{i,n}\qquad &x_{1}x_{n}-x_{1,n}^4 \qquad & i\in J\\
\end{array}$$
\noindent Therefore, possible Betti $\mathcal{D}_{2m+1}$-degrees
are
$$\begin{array}{lllll}
& \mathbf{b_1}=2\e_i+2\e_j,\qquad  & \mathbf{b_2}=4\e_{1}+\e_i+\e_j,\\
& \mathbf{b_3}=\e_i+\e_j+4\e_{n},\qquad& \mathbf{b_4}=2\e_{1}+2\e_i+\e_j,\\
& \mathbf{b_5}=2\e_i+\e_j+2\e_{n},\qquad& \mathbf{b_6}=2\e_{1}+\e_i+2\e_j,\\
& \mathbf{b_7}=\e_i+2\e_j+2\e_{n},\qquad& \mathbf{b_8}=2\e_{1}+\e_i+\e_j+2\e_{n},\\
& \mathbf{b_9}=2\e_{1}+\e_i+\e_j+\e_k ,\qquad & \mathbf{b_{10}}=\e_i+\e_j+\e_k+2\e_{n},\\
& \mathbf{b_{11}}=\e_i+2\e_j+\e_k,\qquad& \mathbf{b_{12}}=2\e_i+\e_j+\e_k,\\
& \mathbf{b_{13}}=\e_i+\e_j+2\e_k,\qquad& \mathbf{b_{14}}=\e_i+\e_j+\e_k+\e_{\ell},\\
& \mathbf{b_{15}}=4\e_{1}+2\e_i,\qquad& \mathbf{b_{16}}=2\e_i+4\e_{n},\\
& \mathbf{b_{17}}=2\e_{1}+2\e_i+2\e_{n},\qquad& \mathbf{b_{18}}=4\e_{1}+\e_i+2\e_{n},\\
& \mathbf{b_{19}}=3\e_{1}+\e_i+\e_{n},\qquad& \mathbf{b_{20}}=\e_{1}+\e_{i}+3\e_{n}\\
& \mathbf{b_{21}}=2\e_{1}+\e_i+4\e_{n},\qquad& \mathbf{b_{22}}=4\e_{1}+4\e_{n}.
\end{array}$$
Next we prove that these binomials constitute a minimal generating
set for $I_{\mathcal{D}_{2m+1}}$.

\begin{prop} The set $\mathcal{G}_{\mathcal{D}_{2m+1}}$ is a minimal generating set of $I_{\mathcal{D}_{2m+1}}$.
\end{prop}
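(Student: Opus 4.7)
The plan is to follow the even-case argument, with Theorem 4.1 providing the final step. For each of the $22$ candidate Betti $\mathcal{D}_{2m+1}$-degrees $\mathbf{b_1},\ldots,\mathbf{b_{22}}$ listed before the proposition, I will describe the fiber $\deg_{\mathcal{D}_{2m+1}}^{-1}(\mathbf{b_i})$, determine the connected components of the graph $G(\mathbf{b_i})$, and identify a spanning tree of $S_{\mathbf{b_i}}$ whose edges are precisely the binomials of $\mathcal{G}_{\mathcal{D}_{2m+1}}$ of that degree.

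The degrees split into three groups. The first and largest, consisting of $\mathbf{b_1}$--$\mathbf{b_7}$, $\mathbf{b_{11}}$--$\mathbf{b_{13}}$, $\mathbf{b_{15}}$--$\mathbf{b_{17}}$, $\mathbf{b_{19}}$, $\mathbf{b_{20}}$ and $\mathbf{b_{22}}$, gives a fiber consisting of exactly two monomials which form two distinct singleton components of $G(\mathbf{b_i})$. By Corollary $2.10$ of \cite{charalambous-katsabekis-thoma}, the unique binomial of $\mathcal{G}_{\mathcal{D}_{2m+1}}$ at each such degree is then indispensable and appears in every minimal generating set. The second group, $\mathbf{b_{18}}$ and $\mathbf{b_{21}}$, yields a three-element fiber; however the lower-degree generators $y_{i,1}-x_{1,n}x_{i,n-1}$ and $y_{i,n}-x_{1,n}x_{i,n}$ of degrees $\mathbf{b_{19}}$ and $\mathbf{b_{20}}$, after multiplication by $x_{1,n}$, furnish an edge in $G(\mathbf{b_{18}})$ (respectively $G(\mathbf{b_{21}})$) that merges two of the three monomials into a single component. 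The graph therefore has again only two connected components and the corresponding binomial of $\mathcal{G}_{\mathcal{D}_{2m+1}}$ is indispensable as before.

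The third group, comprising $\mathbf{b_8}$, $\mathbf{b_9}$, $\mathbf{b_{10}}$ and $\mathbf{b_{14}}$, is the interesting one: each fiber contains three pairwise coprime monomials, all sitting in singleton components. Explicitly the three monomials are $\{x_{i,j}x_{k,\ell},\,x_{i,k}x_{j,\ell},\,x_{i,\ell}x_{j,k}\}$ at $\mathbf{b_{14}}$, $\{x_{i,j}x_{k,n-1},\,x_{i,k}x_{j,n-1},\,x_{j,k}x_{i,n-1}\}$ at $\mathbf{b_9}$, the analogous set with $n$ in place of $n-1$ at $\mathbf{b_{10}}$, and $\{x_{1,n}^{2}x_{i,j},\,x_{i,n-1}x_{j,n},\,x_{i,n}x_{j,n-1}\}$ at $\mathbf{b_8}$. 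In each of these four cases $S_{\mathbf{b_i}}$ is a triangle, any two of its three edges form a spanning tree, and the two binomials of $\mathcal{G}_{\mathcal{D}_{2m+1}}$ at that degree give one such choice. Gathering everything and applying Theorem 4.1 then shows that $\mathcal{G}_{\mathcal{D}_{2m+1}}$ is a minimal generating set, and simultaneously that swapping either of these four spanning-tree choices yields an alternative minimal generating set of $I_{\mathcal{D}_{2m+1}}$.

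The main obstacle is the careful enumeration of fibers and lower-degree edges, which is more delicate here than in the even case because of the extra variables $x_{1,n}$, $x_{i,n-1}$, $x_{i,n}$, $y_{i,1}$, $y_{i,n}$ and the relations among them. For each $\mathbf{b_i}$ one has to rule out stray fiber members involving $y_{i,1}$ or $y_{i,n}$ (typically by observing that $\mathbf{b_i}-\e_i-3\e_1-\e_n$ or $\mathbf{b_i}-\e_i-\e_1-3\e_n$ fails to lie in $\N\mathcal{D}_{2m+1}$), and in the two critical degrees $\mathbf{b_{18}}$ and $\mathbf{b_{21}}$ one has to spot the relation that fuses the extra monomial with one of the others. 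Once these finite checks are done, the verification of the spanning-tree picture and the invocation of Theorem 4.1 proceed precisely as in the even case.
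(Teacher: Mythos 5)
Your proposal is correct and follows essentially the same route as the paper: enumerate the fibers of the candidate Betti degrees, read off the connected components of each $G(\mathbf{b_i})$, and apply Theorem 4.1, with singleton components giving indispensability for most degrees and a triangle/spanning-tree choice for $\mathbf{b_8}$, $\mathbf{b_9}$, $\mathbf{b_{10}}$, $\mathbf{b_{14}}$. The one place you diverge is in fact a refinement: the paper lists only two singleton components for $G(\mathbf{b_{18}})$ and $G(\mathbf{b_{21}})$, whereas you correctly spot the third fiber element ($x_{1,n}y_{i,1}$, resp.\ $x_{1,n}y_{i,n}$) and the lower-degree relation $y_{i,1}-x_{1,n}x_{i,1}$ (resp.\ $y_{i,n}-x_{1,n}x_{i,n}$) that fuses it into one component; the only quibble is that this makes the binomial chosen at those two degrees a spanning-tree choice from a non-singleton component rather than literally indispensable, which does not affect the minimality conclusion.
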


\begin{proof} There is no binomial in $I_{\mathcal{D}_{2m+1},\mathbf{b_1}}$. Thus, the graph
$G(\mathbf{b_1})$ consists of two connected components
$\{x_ix_j\}$ and $\{x_{i,j}^2\}$. Similarly, $G(\mathbf{b_2})$ has
$\{x_{i,j}x_{1}\}$ and $\{x_{i,n-1}x_{j,n-1}\}$, $G(\mathbf{b_3})$
has $\{x_{i,j}x_n\}$ and $\{x_{i,n}x_{j,n}\}$, $G(\mathbf{b_4})$
has $\{x_{i,j}x_{i,n-1}\}$ and $\{x_ix_{j,n-1}\}$,
$G(\mathbf{b_5})$ has $\{x_{i,j}x_{i,n}\}$ and $\{x_ix_{j,n}\}$,
$G(\mathbf{b_6})$ has $\{x_jx_{i,n-1}\}$ and
$\{x_{i,j}x_{j,n-1}\}$, $G(\mathbf{b_7})$ has $\{x_jx_{i,n}\}$ and
$\{x_{i,j}x_{j,n}\}$, $G(\mathbf{b_{11}})$ has
$\{x_jx_{i,k}\}$ and $\{x_{i,j}x_{j,k}\}$, $G(\mathbf{b_{12}})$
has $\{x_{i,j}x_{i,k}\}$ and $\{x_{i}x_{j,k}\}$,
$G(\mathbf{b_{13}})$ has $\{x_{k}x_{i,j}\}$ and
$\{x_{i,k}x_{j,k}\}$, $G(\mathbf{b_{15}})$ has $\{x_{i},x_{1}\}$
and $\{x_{i,n-1}^2\}$, $G(\mathbf{b_{16}})$ has $\{x_{i},x_{n}\}$
and $\{x_{i,n}^2\}$, $G(\mathbf{b_{17}})$ has
$\{x_{i,n-1}x_{i,n}\}$ and $\{x_{1,n}^2x_{i}\}$,
$G(\mathbf{b_{18}})$ has $\{x_{i,n}x_{1}\}$ and
$\{x_{1,n}^2x_{i,n-1}\}$, $G(\mathbf{b_{19}})$ has
$\{y_{i,1}\}$ and $\{x_{1,n}x_{i,1}\}$, $G(\mathbf{b_{20}})$ has
$\{y_{i,n}\}$ and $\{x_{1,n}x_{i,n}\}$, $G(\mathbf{b_{21}})$ has
$\{x_{i,n-1}x_{n}\}$ and $\{x_{1,n}^2x_{i,n}\}$, and finally
$G(\mathbf{b_{22}})$ has $\{x_{1}x_{n}\}$ and $\{x_{1,n}^4\}$
as its connected components.

Indispensable binomials of $I_{\mathcal{D}_{2m+1}}$ are all determined by these graphs by Corollary $2.10$ in \cite{charalambous-katsabekis-thoma} and hence, corresponding binomials belong to any minimal generating set. 

The other graphs $G(\mathbf{b_8})$, $G(\mathbf{b_{9}})$, $G(\mathbf{b_{10}})$ and $G(\mathbf{b_{14}})$ have three connected components:
$$\{x_{i,n-1}x_{j,n}\} \cup \{x_{j,n-1}x_{i,n}\} \cup \{x_{1,n}^2x_{i,j}\},
\{x_{j,k}x_{i,n-1}\} \cup \{x_{i,j}x_{k,n-1}\} \cup
\{x_{i,k}x_{j,n-1}\}, $$
$$\{x_{j,k}x_{i,n}\} \cup \{x_{i,j}x_{k,n}\} \cup \{x_{i,k}x_{j,n}\} \;\mathrm{and}\;
\{x_{i,k}x_{j,\ell}\} \cup \{x_{i,j}x_{k,\ell}\} \cup \{x_{i,\ell}x_{j,k}\}$$

\noindent respectively. Each connected
component of these graphs is a singleton. Therefore, the complete graphs are triangles obtained by joining the connected
components of the graphs $G(\mathbf{b_{8}})$, $G(\mathbf{b_{9}})$, $G(\mathbf{b_{10}})$ and $G(\mathbf{b_{14}})$, respectively. Thus, we obtain the spanning trees by deleting one edge from each
triangle.

Therefore, in a minimal generating set only one of the following
three binomial couples may appear corresponding to
$G(\mathbf{b_{8}})$;
\begin{center}
$x_{i,n-1}x_{j,n}-x_{1,n}^2x_{i,j}$ and $x_{i,n-1}x_{j,n}-x_{j,n-1}x_{i,n}$,\\
$x_{1,n}^2x_{i,j}-x_{i,n-1}x_{j,n}$ and $x_{1,n}^2x_{i,j}-x_{j,n-1}x_{i,n}$,\\
$x_{j,n-1}x_{i,n}-x_{i,n-1}x_{j,n}$ and $x_{j,n-1}x_{i,n}-x_{1,n}^2x_{i,j}$
\end{center}
and the same is true for the following couples corresponding to
$G(\mathbf{b_{9}})$;
\begin{center}
$x_{j,k}x_{i,n-1}-x_{i,j}x_{k,n-1}$ and $x_{j,k}x_{i,n-1}-x_{i,k}x_{j,n-1}$,\\
$x_{i,j}x_{k,n-1}-x_{j,k}x_{i,n-1}$ and $x_{i,j}x_{k,n-1}-x_{i,k}x_{j,n-1}$,\\
$x_{i,k}x_{j,n-1}-x_{j,k}x_{i,n-1}$ and
$x_{i,k}x_{j,n-1}-x_{i,j}x_{k,n-1}$
\end{center}
and similarly for $G(\mathbf{b_{10}})$;
\begin{center}
$x_{j,k}x_{i,n}-x_{i,j}x_{k,n}$ and $x_{j,k}x_{i,n}-x_{i,k}x_{j,n}$,\\
$x_{i,j}x_{k,n}-x_{j,k}x_{i,n}$ and $x_{i,j}x_{k,n}-x_{i,k}x_{j,n}$,\\
$x_{i,k}x_{j,n}-x_{j,k}x_{i,n}$ and
$x_{i,k}x_{j,n}-x_{i,j}x_{k,n}$
\end{center}
and for $G(\mathbf{b_{14}})$;
\begin{center}
$x_{i,k}x_{j,\ell}-x_{i,j}x_{k,\ell}$ and $x_{i,k}x_{j,\ell}-x_{i,\ell}x_{j,k}$,\\
$x_{i,j}x_{k,\ell}-x_{i,k}x_{j,\ell}$ and $x_{i,j}x_{k,\ell}-x_{i,\ell}x_{j,k}$,\\
$x_{i,\ell}x_{j,k}-x_{i,j}x_{k,\ell}$ and
$x_{i,\ell}x_{j,k}-x_{i,k}x_{j,\ell}$.
\end{center}

\noindent These discussions show that there are many minimal generating sets for 
$I_{\mathcal{D}_{2m+1}}$ and in particular, the set
$\mathcal{G}_{\mathcal{D}_{2m+1}}$ is a minimal generating set of
$I_{\mathcal{D}_{2m+1}}$.
\end{proof}

\subsection{$E_n$-type} In this case, it is easy to check that the Gr\"obner basis given in Theorem \ref{En} constitutes a minimal generating set for each $n=6,\, 7,\, 8$. Indeed, there is nothing to prove for the case of $n=8$, as the corresponding toric ideal is trivial. In the case of $n=7$, the corresponding toric ideal is generated minimally by the $6$ binomials given in Theorem \ref{En} $(2)$ as we explain now. Let $\b$ be the $\mathcal{E}_7$-degree of a binomial given in Theorem \ref{En} $(2)$. Since the graph $G(\b)$ has two connected components, the complete graph $S_{\b}$ (and its spanning tree $T_{\b}$) is a line segment and thus $\mathcal{F}_{T_{b}}$ is a singleton. As the connected components of $G(\b)$ are singletons, $\mathcal{F}_{T_{b}}$ \textit{must} consist of the binomial we have started with. This means that the binomial is \textit{indispensable}, i.e. appears in any minimal generating set. Therefore the toric ideal has a unique minimal generating set provided by Theorem \ref{En} $(2)$. 

As for the case of $n=6$, we have a generating set given in Theorem \ref{En} $(1)$ consisting of $35$ binomials. Let $\b=2\e_1+2\e_2+\e_4+\e_5$ which is the $\mathcal{E}_6$-degree of the binomial $x_{11}x_{13}-x_7x_8x_9$. The graph $G(\b)$ has two connected components $\{x_{11}x_{13}\}$ and $\{x_7x_8x_9, x_7^2x_{10}\}$. As before the complete graph $S_{\b}$ (and its spanning tree $T_{\b}$) is a line segment and thus $\mathcal{F}_{T_{b}}$ is a singleton but it changes according to which monomial we choose from the second component of $G(\b)$. So, $\mathcal{F}_{T_{b}}$ is either $\{x_{11}x_{13}-x_7x_8x_9\}$ or $\{x_{11}x_{13}-x_7^2x_{10}\}$. We have the same situation for the following degrees:
$$\begin{array}{lllll}
& \mathbf{b}=\e_1+2\e_2+2\e_4+\e_5,\\
& \mathbf{b}=2\e_1+\e_2+\e_4+2\e_5,\\
& \mathbf{b}=\e_1+\e_2+2\e_4+2\e_5.
\end{array}$$
\noindent It is a standard procedure to check that the other $31$ binomials given in Theorem \ref{En} $(1)$ are indispensable, so there are $8$ different minimal generating sets for the toric ideal including the one provided by Theorem \ref{En} $(1)$.

\section{What about $A_n$-type?}
There are two ways to study the question of whether or not toric ideals of these configurations have squarefree initial ideals. The first one is to produce an example with no squarefree initial ideal using computer programs. In order to achieve this goal one has to find all possible initial ideals for a fixed configuration. The toric ideal corresponding to $A_2$ is generated by a binomial with a squarefree monomial. One can compute $29$ different initial ideals for the toric ideal of $A_3$ and obtain the unique squarefree one generated by $6$ monomials by using e.g. Gfan \cite{Gfan}. As long as $n$ gets larger values listing all the possible initial ideals (or regular triangulations of the corresponding convex polytope) using computer programs becomes problematic. In the second way, one has to determine the correct term order with respect to which the initial ideal is generated by squarefree monomials by heuristic/experimental methods. For the toric ideal of $A_4$ the lexicographic ordering with $x_{14} >x_{12} >x_{10} >x_{9} >x_{7} >x_{4} >x_{8} >x_{6} >x_{5} >x_{3} >x_{11} >x_{1} >x_{2}$ gives a Gr\"obner basis consisting of $54$ binomials with a squarefree initial ideal. Similarly, the toric ideal of $A_5$ has a squarefree initial ideal generated by $105$ monomials which are obtained as the initial terms with respect to the lexicographic ordering with $x_{19} >x_{18} >x_{17} >x_{11} >x_{10} >x_{3} >x_{16} >x_{13} >x_{7} >x_{15} >x_{14} >x_{12} > x_{8} >x_{5} >x_{9} >x_{4} >x_{6} >x_{2} >x_{1}.$ However, for larger values of $n$, proving the existence of squarefree initial ideals is difficult as well. This is due to the fact that there is no general formula for the vector configuration as in the case of $D$-type, although one can compute them one by one with e.g. CoCoA using the algorithm described in \cite{mesut}.

\section*{Acknowledgments} The authors would like to thank M. Tosun for increasing their interest on toric varieties of rational surface singularities and A. Jensen for correspondences about the possibility to use Gfan to enumerate all initial ideals. They also thank the anonymous referee for useful suggestions which improved the presentation of the paper. \bigskip

\end{document}